\newcommand{\Q}{\mathbb{Q}}
\newcommand{\N}{\mathbb{N}}
\newcommand{\Z}{\mathbb{Z}}
\newcommand{\R}{\mathbb{R}}
\newcommand{\F}{\mathbb{F}}
\newcommand{\C}{\mathbb{C}}
\newcommand{\Gal}{\mathrm{Gal}}
\newcommand{\tors}{\mathrm{tors}}
\newcommand{\sat}{\mathrm{sat}}
\newcommand{\End}{\mathrm{End}}
\newcommand{\G}{\mathbb{G}}
\newcommand{\poubelle}[1]{}
\theoremstyle{plain}
\newtheorem{thm}{Théorème}[section]
\newtheorem{prop}[thm]{Proposition}
\newtheorem{cor}[thm]{Corollaire}
\newtheorem{lmm}[thm]{Lemme}
\newtheorem{conj}[thm]{Conjecture}
\newtheorem*{lmm*}{Lemme}
\newtheorem*{conj*}{Conjecture}
\newtheorem*{thm*}{Théorème}
\newtheorem*{claim*}{Fait}
\theoremstyle{plain}
\newtheorem{rqu}[thm]{Remarque}
\theoremstyle{plain}
\newtheorem{defn}[thm]{Définition}
\begin{document}

\title[Points de petite hauteur sur $\G_m^n \times A$]{Points de petite hauteur sur une variété semi-abélienne de la forme $\G_m^n \times A$}
\date{}
\author{Arnaud Plessis}
\address{Arnaud Plessis : Academy of Mathematics ans Systems Science, Morningside Center of Mathematics, Chinese Academy of Sciences, Beijing 100190}
\email{plessis@amss.ac.cn}

\maketitle 

\selectlanguage{french}

\begin{abstract}
Récemment, Rémond a énoncé une conjecture très générale pour la minoration d'une hauteur canonique sur une variété abélienne ou sur une puissance du groupe multiplicatif.

Dans ce papier, nous étendons un cas particulier de cette conjecture en considérant des variétés semi-abéliennes de la forme $\G_m^n \times A$.
Cela nous permet d'interpréter plusieurs résultats déjà présents dans la littérature comme des cas particuliers de cette nouvelle conjecture. 
Enfin, nous donnons un nouvel exemple allant dans le sens de celle-ci. 
\end{abstract}

\selectlanguage{english}

\begin{abstract}
Recently, Rémond stated a very general conjecture on lower bounds of a canonical height on either an abelian variety or a power of the multiplicative group. 

In this note, we extend a particular case of this conjecture to semi-abelian varieties of the form $\G_m^n \times A$. 
This allows us to connect many results already existing in the litterature. 
Finally, we give new examples for which this conjecture holds.  
\end{abstract}

\selectlanguage{french}

\section{Introduction.}
Les notations standards ci-dessous seront valables pour tout ce texte.
\begin{itemize}
\item $\overline{k}$ : clôture algébrique d'un corps commutatif $k$;
\item $\mathcal{O}_k$ : anneau des entiers d' un corps de nombres ou d'un corps local $k$;
\item $\G_m^n$ :  groupe multiplicatif de dimension $n\geq 0$;
\item $\zeta_n$ : racine de l'unité d'ordre un entier $n\geq 1$;
\item $\langle \alpha \rangle=\{\alpha^n, n \in\Z\}$ le groupe engendré par $\alpha\in \overline{\Q}^*$; 
\item $\langle \alpha\rangle_{\sat, p}=\{g\in \overline{\Q}^* \vert \; \exists n\geq 0, g^{p^n}\in \langle \alpha\rangle\}$ avec $\alpha\in \overline{\Q}^*$ et $p\geq 2$ un nombre premier;  
\item  $h : \overline{\Q}^* \to \R$ la hauteur (logarithmique, absolue) de Weil; 
\item $\hat{h}_{\G_m^n}(\boldsymbol{\alpha})=h(\alpha_1)+\dots +h(\alpha_n)$, où $\boldsymbol{\alpha}=(\alpha_1,\dots,\alpha_n)\in\G_m^n(\overline{\Q})$;
\item $\hat{h}_A:=\hat{h}_{A,\mathcal{L}} : A(\overline{k}) \to \R$ la hauteur de Néron-Tate d'une variété abélienne $A$ définie sur un corps de nombres $k$ associée à un fibré en droite ample et symétrique $\mathcal{L}$ (que l'on ne mentionnera plus à l'avenir);
\item Pour une variété semi-abélienne $\G$ définie sur un corps de nombres $k$ et un sous-groupe $\Gamma \subset \G(\overline{k})$, notons 
\begin{itemize}
\item $O_\G$ l'élément neutre de $\G$; 
\item $[m] : \G(\overline{k}) \to \G(\overline{k})$ la multiplication usuelle par un entier $m$; 
\item $k(F)$ le corps de rationalité d'un ensemble $F\subset \G(\overline{k})$, i.e. le plus petit sous-corps de $\overline{k}$ sur lequel tous les points de $F$ sont rationnels. 
\item le rang de $\Gamma$ comme étant la dimension du $\Q$-espace vectoriel $\Gamma \otimes_\Z \Q$;
\item $\Gamma_\sat =\{ P\in \G(\overline{k}) \mid \exists m\geq 1, [m]P\in \End (\G).\Gamma \}$ le groupe saturé de $\Gamma$, où $\End (\G).\Gamma$ désigne le groupe engendré par les éléments de la forme $\phi(\gamma)$ avec $\phi$ un endomorphisme de $\G$ et $\gamma\in\Gamma$; 
\item $\G_\tors=\{O_\G\}_\sat$ le groupe de torsion de $\G$ et $\mu_\infty= (\G_m)_\tors$.
\end{itemize}
\end{itemize}

La conjecture ci-dessous correspond au cas de la forme de degré un d'une conjecture récente et profonde de Rémond \cite[Conjecture 3.4]{R\'emond}.

\begin{conj} \label{conj 1}
Supposons que $\G=\G_m^n$ ou que $\G$ soit une variété abélienne définie sur un corps de nombres $k$. 
Si $\Gamma\subset \G(\overline{k})$ est un groupe de rang fini, alors il existe $c_\Gamma>0$ tel que $\hat{h}_\G(P)\geq c_\Gamma$ pour tout $P\in \G(k(\Gamma))\backslash \Gamma_\sat$.
\end{conj}

Il existe peu de résultats connus de la conjecture \ref{conj 1}, voir, entre autres, \cite{Baker, AmorosoZannier} et \cite[Theorem 2]{Habegger}. 
Par exemple, le cas d'apparence simple $k=\Q$, $\G=\G_m$ et $\Gamma= \langle 2 \rangle_\sat=\{ \zeta 2^x, \zeta\in \mu_\infty, x\in \Q\}$ reste encore inconnu, i.e. que l'on ne sait toujours pas s'il existe $c>0$ tel que $h(\alpha)\geq c$ pour tout $\alpha \in \Q(\mu_\infty, 2^{1/2}, 2^{1/3}, \dots)^*\backslash \langle 2 \rangle_\sat$. 
Amoroso a cependant prouvé que cela était vrai si $\alpha$ vivait dans le sous-ensemble $\Q(\zeta_3, 2^{ 1/3}, \zeta_{3^2}, 2^{1/3^2}, \dots)^*\backslash \langle 2 \rangle_\sat $.  
Plus précisément, il montra \cite[Theorem 3.3]{Amoroso}:

\begin{thm} \label{thm 1}
Soient $a\geq 2$ un entier et $p > 2$ un nombre premier tels que $p\nmid a$ et $p^2\nmid a^{p-1}-1$. 
Il existe alors $c>0$ tel que $h(\alpha) \geq c$ pour tout $\alpha\in \Q(\langle a \rangle_{\sat,p})^* \backslash \langle a \rangle_\sat$. 
\end{thm}
R\'emond a déjà montré que la conjecture \ref{conj 1} ne pouvait pas s'étendre aux variétés semi-abéliennes car celles-ci peuvent posséder des points de Ribet (voir section \ref{section 5} pour la d\'efinition) qui ne sont pas de torsion \cite[§ 5]{R\'emond}.

On sait que les sections de Ribet (en parlant grossi\`erement, une section de Ribet est une section \'etendant un point de Ribet, voir \cite{BMPZ} pour une d\'efinition formelle) sont les seules obstructions \`a la validit\'e de la conjecture de Manin-Munford relative pour des familles de surfaces semi-ab\'eliennes de dimension $1$ \cite{BMPZ}. 
La conjecture de R\'emond \'etant li\'ee \`a la conjecture de Zilber-Pink, nous pouvons alors s'attendre \`a ce que les points de Ribet soient les seules obstructions \`a la conjecture \ref{conj 1}. 

\begin{conj} \label{conj 2}
Soit $\G$ une variété semi-abélienne définie sur un corps de nombres $k$. 
Notons $\G_\mathrm{Rib}$ le groupe des points de Ribet.
Si $\Gamma\subset \G(\overline{k})$ est un groupe de rang fini, alors il existe $c_\Gamma>0$ tel que $\hat{h}_\G(P)\geq c_\Gamma$ pour tout $P\in \G(k(\Gamma))\backslash (\G_\mathrm{Rib} + \Gamma_\sat)$, o\`u $\G_\mathrm{Rib} + \Gamma_\sat$ d\'esigne le groupe engendr\'e par tous les points de Ribet et de $\Gamma_\sat$.
\end{conj}

Pour la construction classique de la hauteur $\hat{h}_\G$ dans le cas semi-ab\'elien, nous r\'ef\'erons le lecteur \`a \cite[section 2.3.c]{R\'emond}.

\`A partir de maintenant, nous \'etudions cette conjecture dans le cas plus abordable des variétés semi-abéliennes de la forme $\G_m^n \times A$, avec $A$ une variété abélienne. 
Nous montrerons dans la section \ref{section 5} que ses points de Ribet sont pr\'ecis\'ement ses points de torsion. 
Dans ce cas pr\'ecis, la conjecture \ref{conj 2} se r\'e\'ecrit alors comme suit :

\begin{conj}  \label{conj 3}
Soit $A$ une variété abélienne définie sur un corps de nombre $k$ et posons $\G=\G_m^n \times A$. 
Si $\Gamma \subset \G(\overline{k})$ est un groupe de rang fini, alors il existe $c_\Gamma>0$ tel que pour tout $(\boldsymbol{\alpha}, P) \in \G(k(\Gamma))\backslash \Gamma_\sat$, \[ \hat{h}_\G (\boldsymbol{\alpha}, P):= \hat{h}_{\G_m^n}(\boldsymbol{\alpha})+\hat{h}_A(P) \geq c_\Gamma.\]
\end{conj}

Posons $\mathbf{1}=(1,\dots,1)\in \G_m^n(\overline{k})$.
Si $\Gamma = \Gamma' \times \{O_A\}$, avec $\Gamma'\subset \G_m^n(\overline{k})$ un groupe de rang fini, la conjecture \ref{conj 3} prédit l'existence d'un $c>0$ tel que $\hat{h}_\G(\mathbf{1},P)\geq c$ pour tout $(\mathbf{1}, P)\in \G(k(\Gamma'))\backslash \Gamma_\sat$. 
Ainsi, $\hat{h}_A(P)\geq c$ pour tout $P\in A(k(\Gamma'))$ tel que $(\mathbf{1},P)\notin \Gamma_\sat$. 
Cette conjecture localise donc les points de petite hauteur de $A(k(\Gamma'))$, alors que la conjecture \ref{conj 1} les localise seulement pour $\G_m^n(k(\Gamma'))$.

De manière similaire, si $\Gamma = \{ \mathbf{1}\} \times \Gamma'$, avec $\Gamma' \subset A(\overline{k})$ un groupe de rang fini, la conjecture \ref{conj 3} donne des informations sur les points de petite hauteur dans $\G_m^n(k(\Gamma'))$, tandis que la conjecture \ref{conj 1} en donne uniquement pour $A(k(\Gamma'))$. 

Notons que tout sous-groupe de $\G_\tors=\{O_\G\}_\sat$ a un saturé égal à $\G_\tors$. 
Citons maintenant quelques résultats allant dans le sens de la conjecture \ref{conj 3}. 

 \begin{thm} [Amoroso-Zannier \cite{AmorosoZannier} ; Baker-Silverman \cite{BakerSilverman}] \label{thm 2}
 Il existe $c>0$ tel que $\hat{h}_\G(\boldsymbol{\alpha},P)\geq c$ pour tout $(\boldsymbol{\alpha}, P)\in \G(k(\mu_\infty))\backslash \G_\tors$, où $\G=\G_m \times A$.
 En d'autres mots, la conjecture \ref{conj 3} est satisfaite pour $\G= \G_m \times A$ et $\Gamma=\mu_\infty \times \{O_A\} \subset \G_\tors$.
  \end{thm}
  
\begin{thm} [Habegger, Corollary 1 \cite{Habegger}] \label{thm 3}
Soit $E$ une courbe elliptique définie sur $\Q$. 
Posons $\G=\G_m \times E$. 
Il existe alors $c>0$ tel que $\hat{h}_\G(\boldsymbol{\alpha}, P)\geq c$ pour tout $(\boldsymbol{\alpha}, P)\in \G(\Q(E_\tors))\backslash \G_\tors$.
En d'autres mots, la conjecture \ref{conj 3} est satisfaite pour $k=\Q$, $\G=\G_m \times E$ et $\Gamma=\{1\} \times E_\tors \subset \G_\tors$. 
\end{thm}

\begin{rqu} \label{rqu 1}
\rm{Dans le théorème \ref{thm 2}, \cite{AmorosoZannier} correspond à la \textit{partie torique}, i.e. qu'il n'y a pas de point de petite hauteur dans $k(\mu_\infty)^* \backslash \mu_\infty$ et \cite{BakerSilverman} à la \textit{partie abélienne}, i.e. qu'il n'y a pas de point de petite hauteur dans $A(k(\mu_\infty))\backslash A_\tors$.
La concaténation de ces deux théorèmes montre qu'il n'y a pas de point de petite hauteur dans \[(\G_m \times A)(k(\mu_\infty))\backslash (\mu_\infty \times A_\tors)= \G(k(\Gamma))\backslash \G_\tors,\] où $\G=\G_m \times A$ et $\Gamma= \mu_\infty \times \{O_A\}$. 
Cela correspond précisément au théorème \ref{thm 2}.

Pour montrer le théorème \ref{thm 3}, Habegger a procédé de manière similaire en montrant d'abord la partie torique \cite[Theorem 1]{Habegger}, puis la partie elliptique \cite[Theorem 2]{Habegger} qu'il a ensuite unifiées pour en déduire le théorème \ref{thm 3}. }
\end{rqu}

On se propose de donner dans cet article un premier exemple, à ma connaissance, de la conjecture \ref{conj 3} lorsque $\Gamma_\sat\neq \G_\tors$ et $A\neq O_A$  (lorsque $A=O_A$, un tel exemple a déjà été donné dans le théorème \ref{thm 1}).
C'est l'objet du corollaire \ref{cor 1}. 
Comme pour les théorèmes \ref{thm 2} et \ref{thm 3}, notre corollaire est la fusion de sa partie torique (voir théorème \ref{thm 4} ci-dessous) et de sa partie elliptique (voir théorème \ref{thm 5} ci-dessous) que l'on montrera respectivement dans les sections \ref{section 3} et \ref{section 4}.

\begin{thm} \label{thm 4}
Soient $k$ un corps de nombres et $a\in k^*$. 
Si $p>2$ est un nombre premier ne divisant pas le discriminant de $k$, alors il existe $c>0$ tel que $h(\alpha) \geq c$ pour tout $\alpha\in k(\langle a \rangle_{\sat,p})^*\backslash \langle a \rangle_\sat$. 
\end{thm}

Le discriminant de $\Q$ étant égal à $1$, ce théorème est alors une généralisation du théorème \ref{thm 1}. 
En particulier, la condition technique "$p\nmid a$ et $p^2 \nmid a^{p-1}-1$"  y a été supprimée. 
 
\begin{thm} \label{thm 5}
Soient $E$ une courbe elliptique définie sur un corps de nombres $k$ et $a\in k^*$. 
Soit $p> 2$ un nombre premier ne divisant pas le discriminant de $k$. 
Si $E$ a une réduction supersingulière en une place de $k$ au-dessus de $p$, alors il existe $c>0$ tel que $\hat{h}_E(P)\geq c$ pour tout $P\in E(k(\langle a\rangle_{\sat, p}))\backslash E_\tors $.
\end{thm}

\begin{rqu} \label{rqu 2}
\rm{Il suffit de montrer ces théorèmes pour $a\in k^*\backslash \mu_\infty$, le cas $a\in \mu_\infty$ ayant déjà été traité dans la remarque \ref{rqu 1} (car dans ce cas, $k(\langle a \rangle_{\sat,p}) \subset k(\mu_\infty)$).}
\end{rqu}

Pour montrer ces théorèmes, on emploiera une nouvelle idée basée sur des théorèmes d'équidistribution que l'on écrira dans la section \ref{section 2}. 
Notons  que depuis, cette technique a été de nouveau utilisée par l'auteur pour traiter d'autres situations de la conjecture \ref{conj 1} \cite{Plessis2, Plessis3}. 

 Pour un groupe $\Gamma$, notons $\Gamma^n=\Gamma \times \dots \times \Gamma$ le produit direct de $n$ copies de $\Gamma$. 
La fusion des deux théorèmes précédents permet d'en déduire le corollaire désiré.

\begin{cor} \label{cor 1}
Soient $E$ une courbe elliptique définie sur un corps de nombres $k$ et $a\in k^*$. 
Posons $\G =\G_m^n \times E$.
Soit $p> 2$ un nombre premier ne divisant pas le discriminant de $k$. 
Si $E$ a une réduction supersingulière en au moins une place de $k$ au-dessus de $p$, alors il existe $c>0$ tel que $\hat{h}_{\G}(\boldsymbol{\alpha}, P)\geq c$ pour tout $(\boldsymbol{\alpha}, P) \in \G(k(\Gamma))\backslash \Gamma_\sat$, où $\Gamma= (\langle a \rangle_{\sat, p})^n \times \{O_E\}$.
\end{cor}

\begin{proof}
 Soit $(\boldsymbol{\alpha},P)=((\alpha_1,\dots,\alpha_n),P)\in \G(k(\Gamma))=\G(k(\langle a \rangle_{\sat, p}))$.
Si la hauteur $\hat{h}_\G(\boldsymbol{\alpha}, P)$ est petite, alors pour tout $i\in \{1,\dots,n\}$, les hauteurs $h(\alpha_i)$ et $\hat{h}_E(P)$ sont également petites.
Des théorèmes \ref{thm 4} et \ref{thm 5}, on a $\alpha_i\in \langle a \rangle_\sat$ pour tout $i$ et $P\in E_\tors$.
Comme $\End(\G_m)=\Z$, il existe, par définition d'un groupe saturé, des entiers $m_1,\dots,m_{n+1} \geq 1$ tels que $[m_i]\alpha_i=\alpha_i^{m_i} \in \langle a \rangle$ pour tout $i$ et $[m_{n+1}]P = O_E$.
En posant $m=\prod_{i=1}^{n+1} m_i$, on obtient $[m](\boldsymbol{\alpha},P)\in \langle a \rangle^n \times \{O_E\} \subset \Gamma$. 
D'où $(\boldsymbol{\alpha},P) \in \Gamma_\sat$ et le corollaire s'ensuit. 
\end{proof}

 Comme $\Gamma \subset \G_\tors$ si et seulement si $\Gamma_\sat= \G_\tors$, il devient clair que $\Gamma_\sat \neq \G_\tors$ dès lors que $a\notin \mu_\infty$, ce qui était l'objectif recherché.

\subsection*{Remerciements}
Je remercie F. Amoroso pour sa patience et sa disponibilité. 
Je tiens aussi à remercier P. Habegger, S. Le Fourn, D. Lombardo et J. Poineau pour avoir pris le temps de répondre à mes questions.
Je remercie également G. R\'emond pour nos discussions sur ce sujet.
Enfin, je remercie également les différents arbitres anonymes qui ont su me guider pour grandement améliorer la qualité de ce papier. 
Ce travail a été financé par Morningside Center of Mathematics, CAS.  

\section{\'Equidistribution} \label{section 2} 
Cette section a pour objectif d'énoncer un r\'esultat d'\'equidistribution que l'on utilisera pour prouver notre th\'eor\`eme \ref{thm 4}.

Dans cette section, $K$ désignera un corps de nombres et $w$ une place finie de $K$. 
Notons $\vert . \vert_w$ la valeur absolue $w$-adique normalisée, i.e. $\vert p\vert_w=p^{-1}$, où $(p)=w \cap \Z$ et $K_w$ le complété de $K$ par rapport à $\vert . \vert_w$. 
La valeur absolue sur $K_w$ s'étend de manière unique en une valeur absolue sur $\overline{K_w}$, et donc de manière unique sur $\C_w$, le complété de $\overline{K_w}$.
 Notons encore $\vert . \vert_w$ la valeur absolue ainsi construite sur $\C_w$.
 
Rappelons que $h : \overline{\Q}^* \to \R$ désigne la hauteur de Weil.  
Elle est positive, invariante sous l'action de $\Gal(\overline{\Q}/\Q)$ et s'annule précisément sur $\mu_\infty$. 
Elle satisfait également l'assertion suivante \[ \forall \alpha,\beta\in \overline{\Q}^*, \; h(\alpha\beta) \leq h(\alpha)+h(\beta)\] ainsi que la propriété de Northcott, i.e. que l'ensemble des points de $\overline{\Q}^*$ de degré et de hauteur bornés est fini. 
Pour des propriétés supplémentaires, le lecteur pourra consulter \cite[§1.5, §1.6]{BombieriGubler}. 

Avant d'énoncer le théorème d'équidistribution que l'on utilisera pour prouver le théorème \ref{thm 4}, nous allons au préalable introduire quelques concepts basiques des espaces analytiques de Berkovich. 
Pour plus de détails sur ces espaces, le lecteur pourra lire \cite{BakerRumely2} ou \cite{Berkovich}. 
 
Notons $\mathbb{P}^1_\mathrm{Berk}(\C_w)$ la droite projective de Berkovich sur $\C_w$. 
C'est un espace topologique compact, Haussdorf et connexe dont les points sont des semi-normes.
La première catégorie de points importante ici est celle s'identifiant à $\C_w$.
Plus précisément, pour chaque $x\in \C_w$, on a une semi-norme $\zeta_x \in \mathbb{P}^1_\mathrm{Berk}(\C_w)$ satisfaisant \[ \forall P\in \C_w[X], \; \zeta_x(P)=\vert P(x)\vert_w,\] où $\C_w[X]$ dénote l'ensemble des polynômes à coefficients dans $\C_w$.  
Posons $D$ le disque unité fermé de $\C_w$. 
Le second point de $ \mathbb{P}^1_\mathrm{Berk}(\C_w)$ qui jouera un rôle crucial ici est le point de Gauss, noté $\zeta_{0,1}$, qui vérifie \[ \forall P\in \C_w[X], \; \zeta_{0,1}(P)=\sup_{z\in D} \vert P(z)\vert_w.\]
Pour un polynôme $\phi(X)\in \C_w[X]$, il est possible de définir une \textit{mesure canonique} sur $\mathbb{P}^1_\mathrm{Berk}(\C_w)$ associée à $\phi$. 
Dans le cas particulier $\phi(X)=X^2$, cette mesure est la masse de Dirac du point de Gauss $\zeta_{0,1}$ \cite[Proposition 10.5]{BakerRumely2}.
En appliquant \cite[Theorem 2.3]{BakerRumely} à $\phi(X)=X^2$, on obtient le théorème d'équidistribution ci-dessous: 

\begin{thm} [Baker-Rumely] \label{thm 6}
Gardons les mêmes notations que ci-dessus et fixons un plongement $\overline{K} \to \overline{K_w}$. 
Soit $(\beta_n)_n$ une suite de points de $\overline{K}^* \backslash \mu_\infty$ telle que $h(\beta_n) \to 0$.
Notons $K_n$ une extension galoisienne finie de $K$ contenant $\beta_n$. 
Alors, pour toute fonction continue $f : \mathbb{P}_\mathrm{Berk}^1(\C_w) \to \R$, on a  
\begin{equation} \label{eq 1}
\frac{1}{[K_n : K]}\sum_{\sigma\in \Gal(K_n/K)} f(\zeta_{\sigma \beta_n}) \to  f(\zeta_{0,1}).
\end{equation}
\end{thm}

\begin{rqu} \label{rqu 3}
\rm{Le terme de gauche dans \eqref{eq 1} ne dépend pas du corps de nombres contenant $\beta_n$. 
Cela se déduit du fait qu'un plongement $ K(\beta_n) \to \overline{K_w}$ se prolonge de $[K_n : K(\beta_n)]$ manières différentes en un plongement $K_n \to \overline{K_w}$.}
\end{rqu}

\begin{rqu} \label{rqu 4}
\rm{Soit $P(X)\in\C_w[X]$ un polynôme. 
La fonction $ \mathbb{P}_\mathrm{Berk}^1(\C_w) \to \R, \; \zeta \mapsto \min\{ \zeta(P), 1/\zeta(P)\}$ est continue par définition de la topologie de Berkovich. 
Cette fonction jouera un rôle important dans la preuve du théorème \ref{thm 4}.}
\end{rqu}

\section{Preuve du théorème \ref{thm 4}}  \label{section 3}
Pour un entier $n\geq 1$, notons $\alpha^{1/n}$ une racine $n$-ième de $\alpha\in \overline{\Q}^*$. 
Rappelons que \[\langle \alpha \rangle_\sat=\{\zeta \; \alpha^x, \zeta\in \mu_\infty, x \in \Q\} \; \text{et} \; \langle \alpha \rangle_{\sat,p}= \{\zeta_{p^r} \alpha^{m/p^s}, r,s\geq 0, m\in \Z\}, \] où $p\geq 2$ désigne un nombre premier. 
Jusqu'à la fin de la section \ref{section 4}, fixons un corps de nombres $k$, un nombre premier $p>2$ ne divisant pas le discriminant de $k$, un nombre $a\in k^* \backslash \mu_\infty$ et une courbe elliptique $E$ définie sur $k$ ayant une réduction supersingulière en une place finie $v$ de $k$ au-dessus de $p$. 

Pour un corps $L$ et un entier $n\geq 0$, posons $L^{p^n}=\{x^{p^n}, x\in L\}$.
Comme $a\in k_v^* \backslash\{1\}$, il existe un entier $\lambda \geq 0$ tel que $a\in k_v^{p^\lambda}\backslash k_v^{p^{\lambda+1}}$. 
Fixons $b\in k_v\backslash k_v^p$ un élément tel que $a=b^{p^\lambda}$.
On peut naturellement identifier $b$ à un élément de $\langle a \rangle_{\sat,p}$ que l'on note encore $b$. 
Posons $K=k(b)$.
Il est clair que $\langle a \rangle_{\sat,p}=\langle b \rangle_{\sat,p}$, $\langle a \rangle_\sat = \langle b \rangle_\sat$ et $K(\langle b \rangle_{\sat,p})=k(\langle a \rangle_{\sat,p})$. 
En vertu de cela et de la remarque \ref{rqu 2}, il suffit de montrer les deux théorèmes ci-dessous pour en déduire les théorèmes \ref{thm 4} et \ref{thm 5}.

\begin{thm} \label{thm 7}
Il existe $c>0$ tel que $h(\alpha) \geq c$ pour tout $\alpha\in K(\langle b \rangle_{\sat,p})^*\backslash \langle b \rangle_\sat$. 
\end{thm}
 
\begin{thm} \label{thm 8}
Il existe $c>0$ tel que $\hat{h}_E(P)\geq c$ pour tout $P\in E(K(\langle b\rangle_{\sat, p}))\backslash E_\tors $.
\end{thm}

\begin{rqu} \label{rqu 5}
\rm{Pour prouver ces théorèmes, on utilisera les groupes de ramification qui est un concept local. 
Or, nos résultats sont globaux. 
Il nous faut donc réussir à passer du local au global. 
L'introduction du corps $K$ nous permettra d'effectuer plus facilement ce passage (lequel est explicité dans \eqref{eq 3}).}
\end{rqu}

\subsection{In\'egalit\'e m\'etrique} \label{ss section 3.1}

Cette sous-section a pour objectif de montrer l'inégalité métrique dont on aura besoin afin de prouver notre théorème \ref{thm 7}; c'est l'objet du lemme \ref{lmm 3}. 
Elle nous sera également utile pour montrer notre théorème \ref{thm 8}. 

Notons que les extensions $K(\zeta_{p^r}, b^{1/p^s})/K$ et $k_v(\zeta_{p^r}, b^{1/p^s})/k_v$ sont galoisiennes et ne dépendent pas du choix de $b^{1/p^s}$ si $r,s$ sont deux entiers tels que $r\geq s\geq 0$.
Notons également que $K(\langle b \rangle_{\sat,p})=K(\zeta_p, b^{1/p}, \zeta_{p^2}, b^{1/p^2}, \dots)$.

\begin{lmm} \label{lmm 1}
Pour tous entiers $r\geq s \geq 0$ avec $r > 0$, l'extension $k_v(\zeta_{p^r}, b^{1/p^s})/k_v$ est totalement ramifiée de degré $(p-1)p^{r+s-1}$.
\end{lmm}

\begin{proof}
L'extension $k_v/\Q_p$ est non ramifiée car $p$ ne divise pas le discriminant de $k$. 
Par ailleurs, $\Q_p(\zeta_{p^r})/\Q_p$ est totalement ramifié de degré $(p-1)p^{r-1}$. 
Par conséquent, $\zeta_p \notin k_v$ et $k_v(\zeta_{p^r})/k_v$ est totalement ramifié de degré $(p-1)p^{r-1}$.
Pour pouvoir conclure, il reste à montrer que $k_v(\zeta_{p^r}, b^{1/p^s})/k_v(\zeta_{p^r})$ est totalement ramifié de degré $p^s$. 

Comme $p>2$, $b\in k_v\backslash k_v^p$ et $\zeta_p\notin k_v$, une conséquence d'un théorème de Schinzel \cite[Proposition 2.5]{viviani} appliquée à $k=k_v$, $a=b$, $m=p$ et $n=p^r$ montre que $b\in k_v(\zeta_{p^r})\backslash k_v(\zeta_{p^r})^p$. 
D'un lemme de Capelli \cite[Chapitre 6, Theorem 9.1]{Lang}, on a \[[k_v(\zeta_{p^r}, b^{1/p^s}) : k_v(\zeta_{p^r})]=p^s.\]
Supposons maintenant que l'extension $k_v(\zeta_{p^r}, b^{1/p^s})/k_v(\zeta_{p^r})$ ne soit pas totalement ramifiée et déduisons-en une absurdité. 
La théorie basique des extensions kummériennes affirme que les sous-corps de cette extension sont les $k_v(\zeta_{p^r}, b^{1/p^g})$ avec $g=0,\dots,s$. 
Le dévissage des extensions de corps locaux \cite[Chapter 1, section 7, Theorem 2]{CasselsFrohlich} permet d'affirmer que 
$k_v(\zeta_{p^r}, b^{1/p})/k_v(\zeta_{p^r})$ est non ramifié. 
La structure des extensions non ramifiées \cite[Chapter 1, section 7]{CasselsFrohlich} donne l'existence d'un entier $m$ tel que $k_v(\zeta_{p^r}, b^{1/p})=k_v(\zeta_m)$.
En particulier, $k_v(\zeta_p, b^{1/p})/k_v$ est abélien. 
Comme $\zeta_p\notin k_v$, un théorème de Schinzel \cite[Theorem 2]{Schinzel} appliqué à $K=k_v$, $a=b$ et $n=p$ montre que $b\in k_v^p$, ce qui est absurde. 
Ceci prouve le lemme.     
\end{proof}

Pour tous entiers $r\geq s\geq 0$, posons pour simplifier $K_{r,s}= K(\zeta_{p^r}, b^{1/p^s})$. 
\begin{lmm} \label{lmm 2}
Il existe une place $w$ de $K$ au-dessus de $v$ telle que 
\begin{enumerate} [i)]
\item $K_w=k_v$; 
\item pour tous entiers $r\geq s \geq 0$ avec $r>0$, il existe une unique place $w_{r,s}$ de $K_{r,s}$ au-dessus de $w$.
\end{enumerate}
\end{lmm}

\begin{proof}
Notons $P(X)\in k[X]$ le polynôme minimal de $b$ sur $k$. 
Comme $b\in k_v$, il existe un polynôme $Q(X)\in k_v[X]$ tel que $P(X)=(X-b)Q(X)$.
Par un lemme de Kummer \cite[Chapter 2, section 10]{CasselsFrohlich} appliqué à $\beta=b$, il existe une place $w$ de $K$ au-dessus de $v$ telle que $K_w=k_v(b)=k_v$. 
Cela montre $i)$. 

Montrons $ii)$. 
Soit $r\geq 1$ un entier. 
L'extension $K_{r,r}/K$ étant galoisienne, le nombre de places de $K_{r,r}$ au-dessus de $w$ est égal à \[ g=\frac{[K_{r,r} : K]}{[K_w(\zeta_{p^r}, b^{1/p^r}) : K_w]}.\]
Clairement, $[K_{r,r} : K] \leq [K(\zeta_{p^r}) : K][K(b^{1/p^r}) : K] \leq (p-1)p^{2r-1}$. 
L'égalité $K_w=k_v$ et le lemme \ref{lmm 1} montrent que $g\leq 1$, i.e. $g=1$.
Le point $ii)$ s'ensuit puisque $K_{r,s}\subset K_{r,r}$ pour tout entier positif $s\leq r$. 
\end{proof}
  
Grâce au $ii)$ du lemme \ref{lmm 2},  et puisque $K(\langle b \rangle_{\sat,p})=\bigcup_{r \geq 0} K_{r,r}$, on peut utiliser sans ambiguïté l'inclusion $K(\langle b \rangle_{\sat, p}) \subset \overline{K_w}$. 
Soit $L\subset K(\langle b \rangle_{\sat,p})$ un corps de nombres contenant $K$.
Afin de simplifier le plus possible notre explication, on notera $w'$ l'unique place de $L$ au-dessus de $w$.

 Comme une conséquence immédiate de ces deux derniers lemmes, on obtient le corollaire  ci-dessous qui nous sera fort utile.

\begin{cor} \label{cor 2}
Soient $r\geq s \geq 0$ deux entiers avec $r>0$. 
Alors $w$ est totalement ramifié dans $K_{r,s}$.
De plus, $[K_{r,s} : K]=(p-1)p^{r+s-1}$. 
\end{cor}

\begin{proof}
Le $i)$ du lemme \ref{lmm 2} donne $K_w=k_v$.
Par le lemme \ref{lmm 1}, l'extension $(K_{r,s})_{w'}/K_w=K_w(\zeta_{p^r}, b^{1/p^s})/K_w$ est donc totalement ramifiée de degré $(p-1)p^{r+s-1}$. 
Le corollaire se d\'eduit maintenant du $ii)$ du lemme \ref{lmm 2}.
\end{proof}

Posons $v(.)=-\log \vert . \vert_v$ sur $k_v^*$. 
Pour des entiers $r\geq s\geq 0$ avec $(r,s)\notin \{(0,0); (1,0)\}$, posons également
\begin{equation*}
G_{r,s}=\begin{cases} 
\Gal(K_{r,s}/K_{r-1,s}) \; \text{si} \; r>s \; \text{et} \; p\mid v(b) \; \text{ou si} \; r>s+1 \; \text{et} \; p\nmid v(b) \\ 
\Gal(K_{r,s}/K_{r,s-1}) \; \text{si} \; r=s \; \text{et} \; p\mid v(b) \; \text{ou si} \; r\leq s+1 \; \text{et} \; p\nmid v(b) \\ 
\end{cases}.
\end{equation*} 
C'est un groupe d'ordre $p$ (donc cyclique) par la multiplicativité des degrés et le corollaire \ref{cor 2}.
Notons $\sigma_{r,s}$ un de ses générateurs et remarquons que le sous-corps $K_{r,s}^{G_{r,s}}$ de $K_{r,s}$ fixé par $G_{r,s}$ est égal à
\begin{equation}\label{eq 2}
K_{r,s}^{G_{r,s}}=\begin{cases} 
K_{r-1,s} \; \text{si} \; r>s \; \text{et} \; p\mid v(b) \; \text{ou si} \; r>s+1 \; \text{et} \; p\nmid v(b) \\ 
K_{r,s-1} \; \text{si} \; r=s \; \text{et} \; p\mid v(b) \; \text{ou si} \; r\leq s+1 \; \text{et} \; p\nmid v(b) \\ 
\end{cases}.
\end{equation}
Nous sommes désormais en mesure d'établir notre inégalité métrique. 

\begin{lmm} \label{lmm 3}
Soient $r\geq s \geq 0$ deux entiers tels que $(r,s) \notin \{(0,0),(1,0)\}$. 
Pour tout $x\in K_{r,s}$ tel que $\vert x \vert_w \leq 1$, on a $\vert \sigma_{r,s} x - x \vert_w \leq p^{-1/p^3}$.
\end{lmm}

\begin{proof}
Rappelons que $K_w=k_v$ (voir le $i)$ du lemme \ref{lmm 2}) est une extension non ramifiée de $\Q_p$ puisque $p$ ne divise pas le discriminant de $k$. 
Par ailleurs, $w$ est totalement ramifié dans $K_{r,s}$ par le corollaire \ref{cor 2}.
L'indice de ramification de l'extension $(K_{r,s})_w/\Q_p$ est donc égal à $[K_{r,s} : K]$.
 
Pour un entier $i\geq 0$, notons  \[ H_i=\left\{\sigma\in \Gal((K_{r,s})_w/K_w), \forall x\in \mathcal{O}_{(K_{r,s})_w}, \; \vert \sigma x - x \vert_w \leq p^{-(i+1)/[K_{r,s} : K]}\right\}\] le $i$-ème groupe de ramification de $\Gal((K_{r,s})_w/K_w)$. 
Il est bien-connu qu'il existe un plus grand entier $T_{r,s}$ tel que $H_{T_{r,s}}$ soit non trivial. 
En appliquant \cite[Corollaire 1.4.4, 1.4.8]{Plessis} avec $F=K_w$, on obtient \[H_{T_{r,s}} = \begin{cases} 
\Gal((K_{r,s})_w/(K_{r-1,s})_w) \; \text{si} \; r>s \; \text{et} \; p\mid v(b) \; \text{ou si} \; r>s+1 \; \text{et} \; p\nmid v(b) \\ 
\Gal((K_{r,s})_w/(K_{r,s-1})_w) \; \text{si} \; r=s \; \text{et} \; p\mid v(b) \; \text{ou si} \; r\leq s+1 \; \text{et} \; p\nmid v(b) \\ 
\end{cases}.\]
L'unicité de la place de $K_{r,s}$ au-dessus de $w$ (voir le $ii)$ du lemme \ref{lmm 2}) entraîne que le morphisme de restriction induit un isomorphisme 
\begin{equation} \label{eq 3}
\Gal((K_{r,s})_w/\left(K_{r,s}^{G_{r,s}}\right)_w) \simeq \Gal(K_{r,s}/K_{r,s}^{G_{r,s}})=G_{r,s}. 
\end{equation}
Par \eqref{eq 2}, on en déduit l'isomorphisme $H_{T_{r,s}} \simeq G_{r,s}$.
Le morphisme $\sigma_{r,s}$ se prolonge donc en un élément de $H_{T_{r,s}}$. 
Soit $x\in K_{r,s}$ tel que $\vert x \vert_w \leq 1$. 
Ainsi, $x\in \mathcal{O}_{(K_{r,s})_w}$ et donc, $\vert \sigma_{r,s} x - x\vert_w \leq p^{- (T_{r,s}+1)/[K_{r,s} : K]}$. 
Le lemme résulte maintenant de l'inégalité  $p^3(T_{r,s}+1) \geq [K_{r,s} : K]$ \cite[Proposition 1.4.9]{Plessis}. 
\end{proof}

\subsection{Preuve du théorème \ref{thm 4}} \label{ss section 3.2}
Avant de pouvoir montrer le théorème \ref{thm 7}, on aura besoin au préalable d'un argument de descente.
Pour un entier $m\geq 1$, notons $\mu_m$ l'ensemble des racines de l'unité d'ordre divisant $m$. 
Rappelons que $K(\langle b \rangle_{\sat,p}) \subset \overline{K_w}$ et que $w'$ d\'esigne l'unique place au-dessus de $w$ de tout corps de nombres inclus dans $K(\langle b \rangle_{\sat,p})$ et contenant $K$.
\begin{prop} \label{prop 1}
Soit $(\alpha_n)_n$ une suite de points de $K(\langle b \rangle_{\sat,p})^*$ telle que $h(\alpha_n) \to 0$. 
Pour tout $n$, notons $r_n \geq s_n \geq 0$ deux entiers avec $r_n\geq 3$ tels que $\alpha_n \in K_{r_n,s_n}$. 
Alors, pour tout $n$ assez grand, il existe $g_n\in\langle b \rangle_\sat$ tel que $\alpha_n \;g_n\in K_{r_n,s_n}^{G_{r_n,s_n}}$.
\end{prop}

\begin{proof}
Pour alléger les notations, posons $G_n = G_{r_n,s_n}$, $\sigma_n=\sigma_{r_n,s_n}$ et $K_n=K_{r_n,s_n}$. 
Si $\alpha_n\in\mu_\infty$ pour un certain $n$, alors $g_n=\alpha_n^{-1} \in \langle b \rangle_\sat$ convient. 
On se réduit ainsi au cas où $\alpha_n \notin \mu_\infty$ pour tout $n$.
Comme $\alpha_n \; g_n\in K_n^{G_n}$ si et seulement si $\alpha_n^{-1} \; g_n^{-1}\in K_n^{G_n}$, alors quitte à remplacer $\alpha_n$ par $\alpha_n^{-1}$, on peut également se ramener au cas où $\vert \alpha_n\vert_w \leq 1$ pour tout $n$. 

Par d\'efinition de la hauteur de Weil, nous avons 
\begin{align*}
h(\alpha_n) = h(\alpha_n^{-1}) & = \frac{1}{[K_n : \Q]} \sum_{v'} [(K_n)_{v'} : \Q_{v'}] \log\max\{1; \vert \alpha_n\vert_{v'}^{-1}\} \\
& \geq \frac{[(K_n)_{w'} : \Q_p]}{[K_n : \Q]} \log\max\{1; \vert \alpha_n\vert_{w'}^{-1}\}
\end{align*}
o\`u dans la somme ci-dessus, $v'$ parcourt l'ensemble des places de $K_n$. 
Le corollaire \ref{cor 2} donne $[(K_n)_{w'} : \Q_p] = [K_n : K][K_w : \Q_p]$. 
Comme $\vert \alpha_n \vert_w \leq 1$, il en r\'esulte que $h(\alpha_n)\geq -\log \vert \alpha_n\vert_w/[K : \Q] \geq 0$. 
Le fait que $h(\alpha_n)\to 0$ entraîne $\vert \alpha_n\vert_w \to 1$.

Supposons par l'absurde qu'il existe une infinité d'entiers $n$ telle que $\beta_n:= (\sigma_n \alpha_n)/\alpha_n \notin \mu_\infty$, i.e. pour tout $n$ quitte à passer à une sous-suite.

Soit $\sigma \in \Gal(K_n/K)$.
Par le $ii)$ du lemme \ref{lmm 2}, on a $\sigma^{-1} w'=w'$. 
D'où 
\begin{equation} \label{eq 4}
\vert \sigma \beta_n -1 \vert_w=\vert \beta_n -1\vert_{\sigma^{-1} w'}=\vert \beta_n-1\vert_w.
\end{equation}
Comme $h(\beta_n) \to 0$ puisque $h(\beta_n) \leq h(\sigma_n \alpha_n)+h(\alpha_n) = 2h(\alpha_n)$ et $\beta_n \notin \mu_\infty$ pour tout $n$, on peut alors appliquer le théorème \ref{thm 6} à cette suite.
 
La fonction $f : \zeta \mapsto \min\{\zeta(X-1), 1/\zeta(X-1)\}$ est continue sur $\mathbb{P}_\mathrm{Berk}^1(\C_w)$ par la remarque \ref{rqu 4}. 
Grâce à \eqref{eq 4}, on obtient, pour tout $\sigma\in \Gal(K_n/K)$, que \[f(\zeta_{\sigma \beta_n})=\min\{\vert \sigma \beta_n-1\vert_w; \vert \sigma  \beta_n-1\vert_w^{-1}\}=\min\{ \vert \beta_n-1\vert_w, \vert \beta_n-1\vert_w^{-1}\}.\]
Par ailleurs, $f(\zeta_{0,1})=1$ (voir section \ref{section 2} pour la définition de $\zeta_{0,1}$). 
Le théorème \ref{thm 6} nous permet enfin d'en d\'eduire que \[ \min\{\vert \beta_n - 1 \vert_w, \vert \beta_n-1\vert_w^{-1}\} = \frac{1}{[K_n : K]} \sum_{\sigma\in \Gal(K_n/K)} f(\zeta_{\sigma \beta_n}) \to f(\zeta_{0,1})=1.\]
La contradiction souhaitée s'obtient en passant à la limite à chaque extrémité de la chaîne d'inégalités ci-dessous (celle de droite découle du lemme \ref{lmm 3} car $\vert \alpha_n\vert_w \leq 1$) : \[ \min\{\vert \beta_n - 1 \vert_w, \vert \beta_n-1\vert_w^{-1}\} \leq \vert\beta_n - 1 \vert_w =\vert \alpha_n\vert_w^{-1} \vert \sigma_n \alpha_n - \alpha_n\vert_w \leq p^{-1/p^3} \vert \alpha_n\vert^{-1}_w. \] 

Nous allons maintenant montrer que $\beta_n \in \mu_p$. 
Posons $m=p^c d$ avec $p\nmid d$ l'ordre de $\beta_n\in K_n$ et $\gamma= \beta_n^{p^c}$. 
Clairement, $\gamma$ est d'ordre $d$.
Comme $p\nmid d$, alors $w$ est non ramifié dans $K(\gamma)$ \cite[Chapter II, Proposition 7.12]{Neukirch}. 
Ensuite, $w$ est totalement ramifié dans $K_n$ d'après le corollaire \ref{cor 2}.
Cela prouve l'égalité $K_n \cap K(w)=K$. 
En particulier, $w\in K$ et donc, $\sigma_n \gamma=\gamma$. 
Comme $\sigma_n\alpha_n^{p^c}=\gamma\alpha_n^{p^c}$, on obtient par récurrence que $\sigma_n^i\alpha_n^{p^c}=\gamma^i\alpha_n^{p^c}$ pour tout $i\geq 1$.
Le morphisme $\sigma_n$ étant d'ordre $p$, il s'ensuit en prenant $i=p$ que $\gamma^p=1$.
Or, $\gamma$ est d'ordre $d$ avec $p\nmid d$. 
D'où $d=1$.
Ainsi, $\beta_n\in K_n$ est d'ordre une puissance de $p$. 

Par la multiplicativité des degrés et le corollaire \ref{cor 2}, on a \[[K_{r_n+1,s_n} : K_n]=[K(\zeta_{p^{r_n+1}}, b^{1/p^{s_n}}) : K(\zeta_{p^{r_n}}, b^{1/p^{s_n}})] =p,\] ce qui montre que $\zeta_{p^{r_n+1}}\notin K_n$.
D'où $\beta_n \in \mu_{p^{r_n}}$. 
Par \eqref{eq 2}, les racines $p^{r_n-1}$-ième de l'unité appartiennent à $K_n^{G_n}$.
 Par conséquent, $\beta_n^p \in K_n^{G_n}$, i.e. $\sigma_n \beta_n^p=\beta_n^p$. 
De plus, $\sigma_n \alpha_n^p=\beta_n^p\alpha_n^p$.
De ces deux égalités, on en déduit par récurrence que $\sigma_n^i \alpha_n^p=\beta_n^{pi}\alpha_n^p$ pour tout $i\geq 1$.
En prenant $i=p$, il en résulte que $\beta_n\in \mu_{p^2}$.
Par hypothèse, $r_n\geq 3$.
Ainsi, $\mu_{p^2}\subset \mu_{p^{r_n-1}}\subset K_n^{G_n}$.
Par conséquent, $\sigma_n\beta_n = \beta_n$.
Comme $\sigma_n\alpha_n=\beta_n \alpha_n$, il s'ensuit, toujours par récurrence,  que $\sigma_n^i \alpha_n = \beta_n^i \alpha_n$ pour tout $i\geq 1$.
En prenant à nouveau $i=p$, on en conclut que $\beta_n\in\mu_p$.

Nous pouvons maintenant conclure notre preuve. 
Posons \[g_n=\begin{cases} 
\zeta_{p^{r_n}} \; \text{si} \; r_n>s_n \; \text{et} \; p\mid v(b) \; \text{ou si} \; r_n>s_n+1 \; \text{et} \; p\nmid v(b) \\ 
b^{1/p^{s_n}} \; \text{si} \; r_n=s_n \; \text{et} \; p\mid v(b) \; \text{ou si} \; r_n\leq s_n+1 \; \text{et} \; p\nmid v(b)
\end{cases}.\] 
D'après \eqref{eq 2}, on a $g_n\notin K_n^{G_n}$ et $g_n^p \in K_n^{G_n}$.
D'où, $(\sigma_n g_n)/g_n \in \mu_p \backslash \{1\}$.
Le groupe $\mu_p$ étant cyclique d'ordre $p$, il existe alors un entier $l$ tel que $ \beta_n = ((\sigma_n g_n)/g_n)^l$. 
Un simple calcul montre que $\sigma_n(\alpha_n \; g_n^{-l})=\alpha_n \; g_n^{-l}$, i.e. $\alpha_n \; g_n^{-l}\in K_n^{G_n}$.
La proposition s'ensuit puisque $g_n\in \langle b \rangle_\sat$.  
\end{proof}

\textit{Démonstration du théorème \ref{thm 7}} :
Soit $(\alpha_n)_n$ une suite de points de $K(\langle b \rangle_{\sat,p})^*$ telle que $h(\alpha_n) \to 0$. 
 Posons $\N=\{0,1,\dots\}$ et pour tout $n$, posons aussi \[ \Lambda_n = \{(r,s) \in \N^2, r\geq s \; \vert \; \exists g \in \langle b \rangle_\sat, \; \alpha_n \; g \in K_{r,s}\}.\]
C'est un ensemble non vide puisque $K(\langle b \rangle_{\sat,p})=\bigcup_{r\geq 0} K_{r,r}$. 
Il existe donc un élément $(r_n,s_n)\in \Lambda_n$ qui soit minimal pour l'ordre lexicographique usuel de $\N^2$. 
Soit $\zeta(n) \; b^{x_n} \in \langle b \rangle_\sat$ tel que $\alpha_n \; \zeta(n) \; b^{x_n} \in K_{r_n,s_n}$, où $\zeta(n)\in \mu_\infty$ et $x_n\in \Q$. 

Supposons par l'absurde que la suite $(r_n)_n$ ne soit pas bornée. 
Quitte à en extraire une sous-suite, on peut supposer que $r_n \to +\infty$ et $r_n\geq 3$ pour tout $n$. 
Le corps $\Q$ étant archimédien, il existe un nombre $y_n \in \Q$ vérifiant $\vert y_n \vert < p^{-(r_n-2)}$ ainsi qu'un entier $q_n\in \Z$ tels que $x_n=y_n+q_n p^{-(r_n-2)}$. 
En remarquant que $b^{1/p^{r_n-2}} \in K_{r_n, r_n-2}$, on en déduit que $\alpha_n \; \zeta(n)\; b^{y_n} \in K_{r_n, s'_n}$, où $s'_n=\max\{s_n,r_n-2\} \leq r_n$. 
Les propriétés basiques de la hauteur de Weil donnent 
\begin{equation} \label{eq 5}
0 \leq  h(\alpha_n \; \zeta(n) \; b^{y_n}) \leq h(\alpha_n)+ p^{-(r_n-2)}h(b).
 \end{equation}
Le terme de droite tend vers $0$ quand $n$ tend vers $+\infty$. 
Par le théorème des gendarmes, $h(\alpha_n \zeta(n) b^{y_n}) \to 0$. 
En appliquant maintenant la proposition \ref{prop 1} avec la suite $(\alpha_n\zeta(n) b^{y_n})_n$, on en déduit l'existence d'un $g_n\in \langle b \rangle_\sat$ tel que $\alpha_n \zeta(n) b^{y_n} g_n \in K_{r_n, s'_n}^{G_{r_n, s'_n}}$ pour tout $n$ assez grand. 
Comme $\zeta(n)b^{y_n} g_n \in \langle b \rangle_\sat$ et que par \eqref{eq 2}, soit $K_{r_n,s'_n}^{G_{r_n,s'_n}}=K_{r_n-1,s'_n}$ et $r_n>s'_n$, soit $K_{r_n,s'_n}^{G_{r_n,s'_n}}=K_{r_n,s'_n-1}$ et $r_n \leq s'_n+1$, il en résulte que soit $(r_n-1,s'_n)\in \Lambda_n$, soit $(r_n, s'_n-1)\in \Lambda_n$. 
La première possibilité contredit la minimalité de $(r_n,s_n)$. 
On a donc $K_{r_n,s'_n}^{G_{r_n,s'_n}}=K_{r_n,s'_n-1}$ et $r_n \leq s'_n+1$. 
Mais alors, $s'_n=s_n$ par définition de ce dernier. 
D'où $(r_n, s_n-1)\in \Lambda_n$ contredisant à nouveau la minimalité de $(r_n,s_n)$. 

La suite $(r_n)_n$ est donc majorée, disons par un entier $r$. 
Rappelons que $r_n\geq 3$ pour tout $n$.
Par construction de la suite $(\alpha_n \; \zeta(n) \; b^{y_n})_n$ et par \eqref{eq 5}, on a \[ \alpha_n \; \zeta(n) \; b^{y_n} \in K_{r,r} \; \text{et} \; h(\alpha_n \; \zeta(n) \; b^{y_n}) \leq h(\alpha_n)+p^{-1} h(b)\] pour tout entier $n$. 
Le théorème de Northcott montre que la suite $(\alpha_n \; \zeta(n) \; b^{y_n})_n$ ne contient qu'un nombre fini de termes distincts.
Notons les $\{\beta_1, \dots, \beta_m\}$. 
 Ainsi, pour tout $n$, il existe $i_n\in\{1,\dots,m\}$ tel que $\alpha_n \; \zeta(n) \; b^{y_n} = \beta_{i_n}$.
Quitte à retirer les premiers termes de la suite $(\alpha_n\zeta(n)b^{y_n})_n$, on peut supposer que pour tout $i\in\{1,\dots,m\}$, le terme $\beta_i$ apparaît une infinité de fois dans cette suite. 

Soient $i\in \{1,\dots,m\}$ et $(\alpha_{\phi(n)} \;\zeta(\phi(n)) \; b^{y_{\phi(n)}})_n$ une sous-suite (infinie) constante égale à $\beta_i$.
Pour tout $n$, il existe $g'_n \in \langle b \rangle_\sat$ tel que $\alpha_{\phi(n)}=\alpha_{\phi(1)} g'_n$.
Clairement, 
 \[0 \leq \inf\{h(\gamma\alpha_{\phi(1)}), \gamma \in \langle b \rangle_\sat \}= \inf\{h(\gamma\alpha_{\phi(n)}), \gamma \in \langle b \rangle_\sat \} \leq h(\alpha_{\phi(n)}).\] 
Par le théorème des gendarmes, on en conclut que \[\inf\{h(\gamma\alpha_{\phi(1)}), \gamma \in \langle b \rangle_\sat \}=0,\] ce qui ne peut se produire que si $\alpha_{\phi(1)}\in \langle b \rangle_\sat$ d'après \cite[Lemma 2.4]{Pottmeyer}. 
On a donc $\beta_i= \alpha_{\phi(1)} \;\zeta(\phi(1)) \; b^{y_{\phi(1)}} \in \langle b\rangle_\sat$. 
Ceci étant valable pour tout $i\in\{1,\dots,m\}$, il s'ensuit que $\alpha_n = \beta_{i_n} \zeta(n)^{-1} b ^{-y_n}\in \langle b \rangle_\sat$ pour tout $n$. 
Le théorème s'ensuit.

\section{Preuve du théorème \ref{thm 5}} \label{section 4}
Rappelons que $E$ est une courbe elliptique définie sur le corps de nombres $K$ (voir le d\'ebut de la section \ref{section 3}) ayant une réduction supersingulière en $w$. 
On peut donc la représenter par une équation de Weierstrass de la forme $y^2=x^3+Ax+B$, où $A, B \in K$ tels que $\vert A\vert_w, \vert B\vert_w \leq 1$ et $\vert 2^4(4A^3+27B^2)\vert_w=1$.
Les coordonnées d'un point $P=(x(P), y(P))\in E(\C_w)$ seront toujours par rapport à ce modèle. 
 
Pour montrer notre théorème \ref{thm 8}, on procèdera comme pour le théorème \ref{thm 7}, i.e. que l'on prouvera d'abord une inégalité métrique (qui est l'objet du prochain résultat), laquelle nous servira ensuite à établir une descente. 

Le lemme ci-dessous ne recquiert que la bonne r\'eduction de $E$ en $w$.
\begin{lmm} \label{lmm 4}
Soient $r\geq s\geq 0$ deux entiers avec $(r,s)\notin\{(0,0); (1,0)\}$ et $P\in E(K_{r,s})$ tels que $\sigma_{r,s} P - P\neq 0$. 
Alors $\vert x''\vert_w \geq p^{2/p^3}$, où $\sigma_{r,s} P-P=(x'',y'')$.
\end{lmm}

\begin{proof}
Posons $\vert.\vert=\vert . \vert_w$, $P=(x,y)$ et $\sigma_{r,s} P=(x',y')$.
Notons $\widetilde{E}$ la réduite de $E$ modulo $w$ et $\widetilde{Q}$ la réduction modulo $w'$ d'un point $Q\in E(K_{r,s})$. 

\underline{Premier cas} : $\vert x\vert >1$.  
Pour tout $(X,Y)\in E(K_{r,s})$ tel que $\vert X\vert > 1$, on a
\begin{equation} \label{eq 6}
 \vert Y^2\vert=\vert X^3 +AX+B\vert= \vert X^3\vert
 \end{equation}
car $\vert A\vert, \vert B\vert \leq 1$.
Posons $t=-x/y$ et $t'=-x'/y'=\sigma_{r,s} t$. 
La relation \eqref{eq 6} appliquée à $(X,Y)=(x,y)$ entraîne $\vert t\vert= \vert x\vert^{-1/2}<1$. 
D'où $\vert t'-t\vert \leq p^{-1/p^3}$ par le lemme \ref{lmm 3}. 

Notons $\mathcal{O}_{K_w}\ldbrack X_1,\dots,X_n\rdbrack$ l'anneau des séries formelles à $n$ indéterminées à coefficients dans $\mathcal{O}_{K_w}$, $F(X_1,X_2)\in\mathcal{O}_{K_w} \ldbrack X_1,X_2 \rdbrack$ la loi formelle de $E/K_w$ associée au paramètre local $-X/Y$ à l'origine et $i(X_1)\in \mathcal{O}_{K_w}\ldbrack X_1 \rdbrack$ l'unique série entière telle que $F(X_1,i(X_1)) \equiv 0$. 

Comme $\sigma_{r,s} P-P=(x'',y'')$, alors $t'':=-x''/y''=F(t',i(t))$. 
Par ailleurs, $\widetilde{P}=\widetilde{O_E}$ puisque $\vert x \vert > 1$. 
Par conséquent, $\widetilde{\sigma_{r,s} P-P}=\widetilde{O_E}$, i.e. $\vert x''\vert > 1$. 
La relation \eqref{eq 6} appliquée à $(X,Y)=(x'',y'')$ prouve que $\vert t''\vert = \vert x''\vert ^{-1/2}$ et l'identité $F(X_1, i(X_1))\equiv 0$ fournit l'existence d'une série entière $G(X_1,X_2)\in \mathcal{O}_{K_w} \ldbrack X_1,X_2\rdbrack$ telle que $F(X_1,X_2)=(i(X_1)-X_2)G(X_1,X_2)$. 
En spécialisant avec $X_1=t'$ et $X_2=i(t)$, on obtient, car $\vert t \vert, \vert t'\vert \leq 1$ et $i(X_1) \in \mathcal{O}_{K_w}\ldbrack X_1\rdbrack$ : 
\begin{equation*}
\vert t''\vert = \vert i(t')-i(t)\vert \; \vert G(t', i(t))\vert \leq \vert i(t')-i(t)\vert \leq \vert t'-t\vert \leq p^{-1/p^3}.
\end{equation*}
Comme $\vert t''\vert=\vert x''\vert^{-1/2}$, il s'ensuit que $\vert x''\vert \geq p^{2/p^3}$. 
Cela achève ce cas.

\underline{Second cas} : $\vert x\vert \leq 1$. 
Comme $y^2=x^3+Ax+B$, alors $\vert y\vert \leq 1$ et donc, d'après le lemme \ref{lmm 3}, $\vert x'-x\vert \leq p^{-1/p^3}$ et $\vert y'-y\vert \leq p^{-1/p^3}$. 
D'où \[\vert x' - x\vert^{-2} \geq p^{2/p^3} \; \text{et} \; \vert y' - y\vert^{-2} \geq p^{2/p^3}.\]
Comme $E$ a bonne r\'eduction en $w$, le point $\widetilde{P}$ est donc non singulier sur $\widetilde{E}$.
Ainsi, soit $\vert y\vert=1$, soit $\vert 3x^2+A\vert=1$. 

\underline{Premier sous-cas} : $\sigma_{r,s} P = - P$. 
Cela se traduit par les relations $x'=x$ et $y' = -y$. 
Si $y=0$, on aurait alors $\sigma_{r,s} P=P$, ce qui est absurde par hypoth\`ese. 
D'o\`u $y\neq 0$. 
Par ailleurs, on a \'egalement $\vert y'-y\vert = \vert 2y \vert = \vert y \vert \leq p^{-1/p^3}$ puisque $w$ est au-dessus de $p\geq 5$.
 Par ce qui pr\'ec\`ede, on a $\vert 3x^2+A\vert=1$. 

Comme $-2y\neq 0$ et $- P = (x,-y)$, la formule d'addition de deux points sur une courbe elliptique \cite[Chapter III, Algorithm 2.3]{Silverman2} donne la relation suivante : \[ x''= \left( \frac{3x^2+A}{-2y}\right)^2 -2x. \] 
L'in\'egalit\'e ultram\'etrique fournit $\vert x''\vert = \vert y\vert^{-2} \geq p^{2/p^3}$, ce qui ach\`eve ce sous-cas.

\underline{Second sous-cas} : $\sigma_{r,s} P \neq - P$. 
Comme $\sigma_{r,s} P \neq P$, on a alors $x'\neq x$.
Comme $x'\neq x$ et $- P = (x,-y)$, la formule d'addition de deux points sur une courbe elliptique nous permet d'en d\'eduire que 
\begin{equation} \label{eq 7}
x''=\left(\frac{y'+y}{x'-x}\right)^2-x'-x.
\end{equation}
\underline{$i)$} : $\vert y+y' \vert \geq 1$. 
Par le $ii)$ du lemme \ref{lmm 2}, on a l'égalité $\sigma_{r,s}^{-1} w' = w'$. 
D'où \[\vert y'\vert= \vert \sigma_{r,s} y\vert_w= \vert \sigma_{r,s} y\vert_{w'}=\vert y \vert_{\sigma^{-1}_{r,s} w'} = \vert y\vert \leq 1.\] 
De même, $\vert x'\vert = \vert x \vert \leq 1$. 
Ainsi, $\vert y+y'\vert=1$ et $\vert x+x'\vert \leq 1$.
 En passant à la valeur absolue dans \eqref{eq 7}, on obtient l'inégalité désirée car \[ p^{2/p^3} \leq \vert x'-x\vert^{-2} = \max\left\{\left\vert \frac{y+y'}{x'-x}\right\vert^2, \vert x'+x\vert\right\}= \vert x''\vert.\]
\underline{$ii)$} : $\vert y'+y\vert < 1$.
Rappelons que $\vert x'-x\vert, \vert y'-y\vert \leq p^{-1/p^3}<1$ et $p> 2$.
Ainsi, \[ \vert y\vert=\vert 2y\vert=\vert (y'+y)-(y'-y)\vert \leq \max\{\vert y'+y\vert, \vert y'-y\vert\} <1.\] 
On obtient donc $\vert 3x^2+A\vert =1$. 
Or, $\vert x'-x\vert < 1$, i.e. $x'\equiv x \mod w$. 
D'où $\vert x'^2+x'x+x^2+A\vert= \vert 3x^2 + A\vert= 1$. 
Ensuite, en soustrayant l'égalité $y^2=x^3+Ax+B$ à l'égalité $y'^2=x'^3+Ax'+B$ on obtient, après factorisation et division par $x'-x\neq 0$,\[\frac{(y'-y)(y'+y)}{x'-x}=x'^2+x'x+x^2+A.\]
D'où $\left\vert (y'+y)/(x'-x) \right\vert = 1/\vert y'-y\vert$. 
Comme $\vert x'+x\vert \leq 1$, on déduit de \eqref{eq 7} que \[ p^{2/p^3} \leq \vert y-y'\vert ^{-2} = \max\left\{\left\vert \frac{y+y'}{x'-x}\right\vert^2, \vert x'+x\vert\right\} = \vert x''\vert .\]
Cela montre la proposition.  
\end{proof}

Une fois l'inégalité métrique obtenue, il nous faut maintenant effectuer un argument de descente.
Comme nous le verrons dans la preuve du théorème \ref{thm 8}, cet argument sera plus simple (bien que similaire) à mettre en place que celui du théorème \ref{thm 7}. 
Cela est dû au fait que $E(K(\langle b \rangle_{\sat,p}))$ ne contienne aucun point de torsion d'ordre une puissance de $p$; c'est l'objet du lemme \ref{lmm 6}.

Mais avant, énonçons un lemme qui découle immédiatement du $ii)$ du lemme \ref{lmm 2} et de \cite[Corollary 1.3.2]{BombieriGubler}. 

\begin{lmm} \label{lmm 5}
Pour tout $Q\in E(K(\langle b\rangle_{\sat,p}))$, on a $[K(Q) : K]=[K(Q)_w : K_w]$. 
\end{lmm}

Notons $E[N]$ le groupe des points de torsion de $E(\overline{K})$ tu\'e par un entier $N\geq 1$. 
  
\begin{lmm} \label{lmm 6}
On a $E(K_{r,s}) \cap E[p]=\{O_E\}$ pour tous entiers $r\geq s\geq 0$ avec $r>0$.
\end{lmm}
 
\begin{proof}
Supposons par l'absurde qu'il existe un point $Q\in E[p] \cap E(K_{r,s})$ d'ordre $p$.
La multiplicativité des degrés et le corollaire \ref{cor 2} montrent que $p^2-1$ ne divise pas le degré $[K(Q) : K]$. 

Par le lemme \ref{lmm 2}, $K_w=k_v$.
Cela fournit l'égalité $K(Q)_w=k_v(Q)$.
 Nous souhaitons obtenir une contradiction en montrant que $p^2-1$ divise $[k_v(Q) : k_v]= [K(Q)_w : K_w]=[K(Q) : K]$, la dernière égalité provenant du lemme \ref{lmm 5}. 

Pour un entier $n\geq 1$, notons $\F_{p^n}$ le corps fini à $p^n$ éléments et $k'_v$ l'unique extension non ramifiée de degré $2$ sur $k_v$.
Fixons dès maintenant un isomorphisme $E[p]\simeq (\Z/p\Z)^2$ de telle sorte que $Q$ s'envoie sur $\begin{pmatrix}
1 \\
0 \\
\end{pmatrix}$. 
Posons $C=\Gal(k'_v(E[p])/k'_v)$ et $\rho : C \hookrightarrow \mathrm{GL}_2(\F_p)$ la représentation naturelle, où $\mathrm{GL}_2(\F_p)$ désigne le groupe des matrices carrées inversibles de taille $2$ à coefficients dans $\F_p$.

 Par construction, le corps résiduel de $k'_v$ contient $\F_{p^2}$ (l'inertie de $k'_v/k_v$ est égale à $2$) et est une extension non ramifiée de $\Q_p$ ($k'_v/k_v$ est non ramifié et $k_v/\Q_p$ l'est également car $p$ ne divise pas le discriminant de $k$). 
Rappelons que $E$ a une réduction supersingulière sur $k'_v$. 
Le groupe $\rho(C)$ est donc un sous-groupe de Cartan non d\'eploy\'e, i.e. un groupe cyclique d'ordre $p^2-1$ \cite[Proposition 12, d)]{Serre}.
L'injectivité de $\rho$ fournit l'égalité 
\begin{equation} \label{eq 8}
[k'_v(E[p]) : k'_v]=p^2-1.
\end{equation}
Notons $\mathrm{Stab} \; Q = \left\{M\in \mathrm{GL}_2(\F_p), M\begin{pmatrix}
1 \\
0 \\
\end{pmatrix} = \begin{pmatrix}
1 \\
0 \\
\end{pmatrix}\right\} $ le stabilisateur de $Q$ pour l'action naturelle de $\mathrm{GL}_2(\F_p)$ sur $E[p]=(\Z/p\Z)^2$.
Il est clair que $\mathrm{Stab} \; Q = \begin{pmatrix}
1 & * \\
0 & * \\
\end{pmatrix}$. 
Soit $M\in \rho(C) \cap \mathrm{Stab} \; Q$.
Alors $M=\begin{pmatrix}
1 & t \\
0 & u \\
\end{pmatrix}$ pour certains $(t,u)\in \F_p \times \F_p$.

Supposons par l'absurde que $u\neq 1$. 
Remarquons que $(1+u)^2-4u=(u-1)^2$ et rappelons que $p>2$.
Par \cite[§ 2.1 b)]{Serre}, $M$ appartient \`a un sous-groupe de Cartan et un seul. 
De plus, celui-ci se doit d'\^etre d\'eploy\'e. 
L'absurdit\'e provient du fait que $M\in \rho(C)$ qui est non-d\'eploy\'e. 
On a donc $u=1$.

Par conséquent, $M^i= \begin{pmatrix}
1 & it \\
0 & 1 \\
\end{pmatrix}$ pour tout entier $i\geq 1$. 
L'ordre de $M$ divise ainsi $p$. 
Par ailleurs, il divise également $p^2-1$ puisque $M\in \rho(C)$; c'est donc la matrice identité.
Le groupe $\rho(C) \cap \mathrm{Stab} \; Q$ étant trivial, l'inclusion \[\rho(\rho^{-1}(\mathrm{Stab} \; Q))= \rho(C\cap \rho^{-1} (\mathrm{Stab} \; Q)) \subset \rho(C) \cap \mathrm{Stab} \; Q\] et l'injectivité de $\rho$ prouvent que le groupe $\rho^{-1}( \mathrm{Stab}\;  Q)$ est trivial. 
Comme $k'_v(Q)$ est égal au sous-corps de $k'_v(E[p])$ fixé par $\rho^{-1}( \mathrm{Stab} \; Q)$, il s'ensuit que $k'_v(Q)=k'_v(E[p])$ et \eqref{eq 8} donne l'égalité $[k'_v(Q) : k'_v]=p^2-1$. 

On a ainsi établi que $[k_v(Q) : k_v] \geq [k'_v(Q) : k'_v]= p^2-1$. 
Par ailleurs, toujours d'après  \cite[Proposition 12, d)]{Serre} (appliquée cette fois-ci avec $K=k_v$), il en résulte que $[k_v(E[p]) : k_v] \in\{p^2-1, 2(p^2-1)\}$. 
De tout cela et de la multiplicativité des degrés, on obtient $[k_v(Q) : k_v]\in \{p^2-1, 2(p^2-1)\}$. 
Ceci prouve le lemme. 
\end{proof} 
 
Rappelons que $\hat{h}_E : E(\overline{K}) \to \R$ désigne la hauteur de Néron-Tate. 
Elle est positive, invariante sous l'action de $\Gal(\overline{K}/K)$, s'annule sur $E_\tors$ et satisfait la propriété de Northcott, i.e. que l'ensemble des points de $E(\overline{K})$ de degré et de hauteur bornés est fini. 
Elle satisfait également la loi du parallélogramme, i.e. : \[ \forall P,Q\in E(\overline{K}),\; \hat{h}_E(P+Q)+\hat{h}_E(P-Q)=2(\hat{h}_E(P)+\hat{h}_E(Q)).\]
Pour d'autres propriétés en plus de celles-ci, on pourra voir \cite[Chapter VIII, §9]{Silverman2}. 

Pour une place $\nu$ (non n\'ecessairement finie) d'un corps de nombres $F\subset \overline{K}$, on note $\lambda_\nu : E(F_\nu) \to \R$ la hauteur locale de N\'eron sur $E$ associ\'ee \`a $\nu$. 
Toutes les propriétés que l'on utilisera ci-dessous concernant cette hauteur se trouvent dans \cite[Chapter VI]{Silverman}.
Si $E$ a bonne r\'eduction en $\nu$, alors $\lambda_\nu(A)\geq 0$ for all $A\in E(F_\nu)$.

Soient $A\in E(\overline{K})$ et $F/K(A)$ une extension finie. 
Pour une place finie $\nu$ de $K$, posons 
\begin{equation} \label{eq 9}
\hat{h}_\nu(A) = \frac{1}{[F : \Q]}\sum_{\nu' \mid \nu} [F_{\nu'} : \Q_\nu] \lambda_{\nu'} (A),
\end{equation}
o\`u $\nu'$ parcourt toutes les places de $F$ au-dessus de $\nu$.
Par ce qui pr\'ec\`ede, $\hat{h}_\nu(A)\geq 0$ si $E$ a bonne r\'eduction en toute place de $F$ au-dessus de $\nu$. 
Ensuite, posons \[\hat{h}_\infty(A)=\frac{1}{[F : \Q]} \sum_{\nu' \mid \infty} [F_{\nu'} : \R] \lambda_{\nu'}(A),\] où $\nu'$ parcourt toutes les places infinies de $F$.
Il est bien-connu que les termes de droite ci-dessus ne d\'ependent pas de $F$.
Cela découle du fait que $\lambda_{\nu'}(A)=\lambda_{\nu''}(A)$, o\`u $\nu''$ désigne la place de $K(A)$ au-dessous de $\nu'$. 
Par ailleurs, on déduit de \cite[Chapter VI, Theorem 2.1]{Silverman} que $\hat{h}_E(A)= \hat{h}_\infty(A)+\sum_\nu \hat{h}_\nu(A)$, où $\nu$ parcourt l'ensemble des places finies de $K$.  

\begin{lmm} \label{lmm 7}
Soit $(Q_n)_n$ une suite de $E(\overline{K})\backslash E_\tors$ telle que $\hat{h}_E(Q_n) \to 0$. 
Alors $\liminf_{n\to +\infty} \hat{h}_\infty(Q_n) \geq 0$. 
De plus, si $\nu$ est une place finie de $K$, alors on a également $\liminf_{n\to +\infty} \hat{h}_\nu(Q_n) \geq 0$.
Enfin, si $E$ a potentiellement bonne réduction en $\nu$, alors $\hat{h}_\nu(Q_n) \geq 0$ for all $n$. 
\end{lmm}

\begin{proof}
Pour la premi\`ere affirmation, voir \cite[Lemma 8.8]{Habegger}.  

Il existe une extension galoisienne finie $F/K$ telle que $E$ a soit bonne r\'eduction, soit r\'eduction multiplicative d\'eploy\'ee en toute place finie de $F$. 
 
Si $E$ a potentiellement bonne r\'eduction en $\nu$, alors $E$ a bonne réduction en toute place de $F$ au-dessus de $\nu$ et le lemme r\'esulte maintenant du texte suivant \eqref{eq 9}.
Sinon, $E$ a r\'eduction multiplicative d\'eploy\'ee  en toute place de $F$ au-dessus de $\nu$. 
Dans cette situation, nous pouvons recopier la preuve faite dans \cite[Lemma 8.8]{Habegger}, ce qui prouve le lemme.  
\end{proof}

\textit{Démonstration du théorème \ref{thm 8}} :
Soit $(P_n)_n$ une suite de points de $E(K(\langle b \rangle_{\sat,p}))$ telle que $\hat{h}_E(P_n) \to 0$. 
Pour tout $n$, notons $(r_n,s_n)$ le plus petit couple d'entiers positifs (pour l'ordre lexicographique usuel) tel que $r_n \geq s_n$ et $P_n\in E(K_{r_n,s_n})$. 
Un tel couple existe puisque $K(\langle b \rangle_{\sat,p})= \bigcup_{r\geq 1} K_{r,r}$. 

Supposons par l'absurde que $(r_n,s_n)\notin \{(0,0),(1,0)\}$ pour une infinité de $n$, i.e. pour tout $n$ quitte à passer à une sous-suite.
Pour simplifier, posons $\sigma_n=\sigma_{r_n,s_n}$. 

Supposons également par l'absurde qu'il existe une infinité d'entiers $n$ telle que $Q_n=\sigma_n P_n -P_n=(x''_n,y''_n)\notin E_\tors$, i.e. pour tout $n$ quitte à en extraire une sous-suite.
Le fait que $\hat{h}_E$ soit positive, invariante par conjugaison et satisfasse la loi du parallélogramme permet d'obtenir \[0 \leq \hat{h}_E(Q_n)=2(\hat{h}_E(\sigma_n P_n)+\hat{h}_E(P_n))-\hat{h}_E(\sigma_n P_n+P_n)\leq 4\hat{h}_E(P_n).\] 
D'après le théorème des gendarmes, $\hat{h}_E(Q_n) \to 0$.  
Rappelons que $E$ a bonne réduction en $w$ et que $w'$ est l'unique place de $K_{r_n,s_n}$ au-dessus de $w$. 
Ainsi, $\lambda_{w'}(Q_n)=(1/2)\log\max\{1, \vert x(Q_n) \vert_w\}$ et comme $\vert x(Q_n)\vert_w \geq p^{2/p^3}$ d'apr\`es le lemme \ref{lmm 4}, on en d\'eduit que \[\hat{h}_w(Q_n) \geq \frac{[(K_{r_n,s_n})_{w'} : \Q_p] \log p}{p^3[K_{r_n,s_n} : \Q]}.\]
Ensuite, le corollaire \ref{cor 2} affirme que $[(K_{r_n,s_n})_{w'} : \Q_p]=[K_{r_n,s_n} : K][K_w : \Q_p]$. 
D'o\`u  $\hat{h}_w(Q_n) \geq \log p/(p^3[K:\Q])$. 
En conclusion, \[ \hat{h}_E(Q_n) \geq \frac{\log p}{p^3[K: \Q]}+ \hat{h}_\infty(Q_n)+ \sum_{\nu\neq w} \hat{h}_\nu(Q_n),\] o\`u la somme parcourt l'emsemble des places finies de $K$ distinctes de $w$.
Notons $\mathcal{M}$ l'ensemble des places finies de $K$ pour lequel $E$ n'a pas potentiellement bonne réduction. 
En particulier, $w\notin \mathcal{M}$. 
Il est bien connu que $\mathcal{M}$ est fini. 
Le lemme \ref{lmm 7} permet d'en déduire que \[ \hat{h}_E(Q_n) \geq \frac{\log p}{p^3[K: \Q]}+ \hat{h}_\infty(Q_n)+ \sum_{\nu\in\mathcal{M}} \hat{h}_\nu(Q_n).\] 
L'absurdit\'e s'ensuit car le terme de gauche converge vers $0$ tandis que la limite inf\'erieure dans celui de droite est d'au moins $\log p/(p^3[K:\Q])$ gr\^ace au lemme \ref{lmm 7}.

On a ainsi montré que $Q_n \in E_\tors \cap E(K_{r_n,s_n})$ pour tout $n$ assez grand. 
Le lemme \ref{lmm 6} montre que son ordre $M_n$ est premier à $p$. 
La place $w$ est donc non ramifiée dans $K(E[M_n])$ \cite[Chapter VII, exercise 7.9]{Silverman2}. 
Elle est également totalement ramifiée dans $K_{r_n,s_n}$ par le corollaire \ref{cor 2}. 
Par conséquent, $K_{r_n,s_n} \cap K(E[M_n])=K$. 
D'où \[Q_n\in E(K_{r_n,s_n}) \cap E(K(E[M_n]))=E(K).\]
En particulier, $\sigma_n Q_n=Q_n$ ou encore $\sigma_n^2 P_n=[2]\sigma_n P_n-P_n$. 
Par récurrence, on obtient $\sigma_n^j P_n=[j]\sigma_n P_n-[j-1]P_n$ pour tout $j\geq 2$. 
Comme $\sigma_n$ est d'ordre $p$, il s'ensuit en prenant $j=p$ que $[p]\sigma_n P_n= [p]P_n$ et donc $[p]Q_n=O_E$.
 L'ordre de $Q_n$ étant premier à $p$, on a $Q_n=\sigma_n P_n-P_n=O_E$.
Ainsi, $P_n\in E(K_{r_n,s_n}^{G_{r_n,s_n}})$ pour tout $i$ assez grand (on rappelle que $G_{r_n,s_n}$ est le groupe cyclique engendré par $\sigma_n$). 
Grâce à \eqref{eq 2}, on a soit $P_n\in E(K_{r_n-1,s_n})$, soit $P_n\in E(K_{r_n,s_n-1})$. 
La seconde possibilité contredit aussitôt la minimalité de $(r_n,s_n)$. 
On a donc $P_n\in E(K_{r_n-1,s_1})$. 
Mais alors, toujours par \eqref{eq 2}, cela implique que $r_n >s_n$, i.e. $r_n-1 \geq s_n$. 
Ceci contredit à nouveau la minimalité de $(r_n,s_n)$. 
En conclusion, $(r_n,s_n) \in\{(0,0),(1,0)\}$ pour tout $n$ assez grand, i.e. $P_n\in E(K_{1,0})$.
Comme $ \hat{h}_E(P_n)\to 0$, le théorème se déduit maintenant de celui de Northcott. 

\section{Appendice : Points de Ribet} \label{section 5} 
Dans cette section, nous d\'efinissons les points de Ribet d'une vari\'et\'e semi-ab\'elienne. 
Nous avons affirm\'e dans l'introduction que les points de Ribet de $\G_m^n \times A$ \'etaient ses points de torsion. 
Le but de cette section est de montrer ce fait. 

Soit $\G$ une variété semi-abélienne, extension d'une variété abélienne $A$ par $\G_m^n$, définie sur un corps de nombres $k$. 
Commençons par définir les points de Ribet de $\G$ (pour une construction plus détaillée dans le cas $n=1$, voir \cite[section 1]{JacquinotRibet}). 

Soit $P\in \G(\overline{k})$. 
Pour un entier $m\in\Z^*$, notons $[1/m]P\in \G(\overline{k})$ un point tel que $[m][1/m]P=P$ et $\G[m]$ le noyau du morphisme $[m] : \G(\overline{k}) \to \G(\overline{k})$. 
Avec nos précédentes notations, on a, pour $m\geq 1$ et $\alpha\in \overline{k}^*$, $\mu_m=\G_m[m]$ et $\alpha^{1/m}=[1/m]\alpha$.
Soit $K\subset \overline{k}$ un corps de nombres tel que $P\in\G(K)$ et remarquons que $[1/m]P$ est défini à un point de $\G[m]$ près. 
Par conséquent, $K(\G[m], [1/m]P)$ ne dépend pas du choix de $[1/m]P$, ce qui donne un sens à la définition ci-dessous.

\begin{defn} [points de Ribet] \label{defn 1}
On dira que $P\in\G(\overline{k})$ est un point de Ribet s'il existe un corps de nombres $K\subset\overline{k}$ tel que pour tout nombre premier $l$ assez grand, \[P\in \G(K) \; \text{et} \; K(\G[l], [1/l]P)=K(\G[l]).\]
Le groupe des points de Ribet de $\G$ sera noté $\G_\mathrm{Rib}$.  
\end{defn}

\begin{rqu} \label{rqu 6}
\rm{On peut aussi définir les points de Ribet en utilisant la géométrie d'Arakelov, voir par exemple \cite{ChambertLoir}.} 
\end{rqu}

Le groupe $\G_{\mathrm{Rib}}$ est non vide puisqu'il contient les points de torsion de $\G$.  

Dans le cas $n=1$, Jacquinot et Ribet ont montré que $\G_\mathrm{Rib}$ peut s'interpréter comme le groupe de division d'un certain groupe de $\G(\overline{k})$ \cite[Theorem 4.6]{JacquinotRibet}. 
J'ignore si cela reste valable dans ce cadre plus général, mais on peut cependant montrer facilement que $\G_\mathrm{Rib}$ est stable par extraction de racines.
Plus précisément : 

\begin{lmm} \label{lmm 8}
 Soient $P\in \G_\mathrm{Rib}$ et $m\in \Z^*$. 
 Alors $[1/m]P\in\G_\mathrm{Rib}$. 
 \end{lmm}
 
\begin{proof}
Posons $Q=[1/m]P$.
Comme $P\in\G_\mathrm{Rib}$, il existe un corps de nombres $K$ tel que $P\in\G(K)$ et $K(\G[l], [1/l]P)=K(\G[l])$ pour tout nombre premier $l$ assez grand.
Quitte à étendre $K$, on peut supposer que $Q\in \G(K)$. 

Soit $l>m$ un nombre premier. 
Comme $l$ et $m$ sont premiers entre eux, l'égalité de Bézout affirme que $K(\G[l], [1/l]Q)=K(\G[l], [m/l]Q)$. 
Comme $[m]Q=P$, on conclut de tout cela que $K(\G[l], [1/l]Q)=K(\G[l])$ pour tout nombre premier $l$ assez grand, i.e. $Q\in \G_\mathrm{Rib}$. 
Cela achève la preuve du lemme. 
\end{proof}

Nous pouvons maintenant montrer ce que l'on d\'esire. 

\begin{prop} \label{prop 2}
Soit $A$ une variété abélienne définie sur un corps de nombres $k$. 
Posons $\G=\G_m^n \times A$. Alors $\G_\mathrm{Rib}= \G_\tors$. 
\end{prop}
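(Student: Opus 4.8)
The plan is to prove both inclusions $\G_\tors \subset \G_\mathrm{Rib}$ and $\G_\mathrm{Rib}\subset \G_\tors$. The first inclusion is already recorded in Lemma~\ref{rqu} (take $P=O_\G$), so the work is entirely in the reverse inclusion. Let $(\boldsymbol{\alpha},P)\in \G_\mathrm{Rib}$ with $\boldsymbol{\alpha}=(\alpha_1,\dots,\alpha_n)$; I must show that each $\alpha_i$ is a root of unity and that $P$ is a torsion point of $A$. The key structural fact is that for $\G=\G_m^n\times A$ the torsion is a product, $\G[m]=(\mu_m)^n\times A[m]$, and a point $[1/l](\boldsymbol{\alpha},P)$ can be chosen of the form $(\alpha_1^{1/l},\dots,\alpha_n^{1/l},[1/l]P)$; hence the Ribet condition $K(\G[l],[1/l](\boldsymbol{\alpha},P))=K(\G[l])$ splits coordinatewise, giving that $\boldsymbol{\alpha}\in(\G_m^n)_\mathrm{Rib}$ and $P\in A_\mathrm{Rib}$ for a common field $K$ and all large primes $l$.

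First I would treat the toric part. Since $(\G_m^n)[l]=(\mu_l)^n$ and $\G_\tors$, $\G_\mathrm{Rib}$ are manifestly products in the $\G_m^n$ direction, it suffices to treat $n=1$: if $\alpha\in(\G_m)_\mathrm{Rib}$ then there is a number field $K\ni\alpha$ with $K(\mu_l,\alpha^{1/l})=K(\mu_l)$ for all large $l$, i.e.\ $\alpha\in K(\mu_l)^l$. Choosing $l$ larger than the order of any root of unity in $K$ so that $\zeta_l\notin K$, Schinzel's theorem \cite[Theorem 2]{abelianbinomialspowerresiduesandexponentialcongruences} (applied with $K$, $a=\alpha$, $n=l$) forces $\alpha\in K^l$. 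Valid for all sufficiently large primes $l$, this forces $v_{\mathfrak p}(\alpha)=0$ at every finite place $\mathfrak p$ of $K$ and, via the product formula together with the archimedean estimate $h(\alpha)\le \tfrac1l h(\alpha)+\dots$ — more cleanly: $\alpha\in K^l$ for infinitely many $l$ already forces $h(\alpha)=0$ by the multiplicativity $h(\alpha)= \tfrac1l h(\beta_l)$ with $[K(\beta_l):K]\le l$ bounded by Northcott-type reasoning — hence $\alpha\in\mu_\infty$. So $(\G_m^n)_\mathrm{Rib}=(\G_m^n)_\tors$.

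For the abelian part, the clean route is to invoke Bertrand's theorem on minimal extensions \cite{bertrand1995minimal}: if $(\alpha,P)\in(\G_m\times A)_\mathrm{Rib}$ then $\alpha\in\mu_\infty$; but one needs the statement that a Ribet point of $A$ alone is torsion. Actually the argument I would run is: take $P\in A_\mathrm{Rib}$, pick an auxiliary point $\alpha\in\G_m$ of infinite order with $(\alpha,P)\in(\G_m\times A)(K)$, note $(\alpha,P)$ need not be Ribet — so instead I would apply Bertrand directly to the semiabelian extension of $A$ by $\G_m$ attached to $P$: Ribet's theory \cite{jacquinot1987deficient} says $P\in A_\mathrm{Rib}$ exactly when the corresponding extension $1\to\G_m\to\G_P\to A\to 1$ splits over a finite extension, and Bertrand's theorem shows that for $A$ over a number field every such extension with a non-torsion "period" is rigid, forcing $P\in A_\tors$. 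Combining the toric conclusion $\boldsymbol{\alpha}\in(\mu_\infty)^n$ with $P\in A_\tors$ gives $(\boldsymbol{\alpha},P)\in\G_\tors$, whence $\G_\mathrm{Rib}\subset\G_\tors$, and with Lemma~\ref{rqu} we get equality.

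The main obstacle is the abelian part: one must correctly package the definition of Ribet point in terms of splitting of the biextension/semiabelian extension and cite the precise form of Bertrand's rigidity result (or of Ribet's theorem computing $A_\mathrm{Rib}$ as a division group — a priori this is for extensions of $A$, not for $A$ itself) that yields $A_\mathrm{Rib}=A_\tors$; the toric half is routine Schinzel-plus-heights.
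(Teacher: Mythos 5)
There is a genuine gap at the very first step of your reverse inclusion: the Ribet condition does \emph{not} split coordinatewise in the toric direction. For $\G=\G_m^n\times A$ with $A\neq 0$ one has $K(\G[l])=K(\mu_l,A[l])=K(A[l])$ (the Weil pairing puts $\mu_l$ inside $K(A[l])$), so the hypothesis only gives $\alpha_i^{1/l}\in K(A[l])$, whereas $\boldsymbol{\alpha}\in(\G_m^n)_\mathrm{Rib}$ would require $\alpha_i^{1/l}\in K(\mu_l)$. Since $K(A[l])/K(\mu_l)$ is a large, generally non-abelian extension whose degree is divisible by $l$, nothing a priori prevents $K(\mu_l,\alpha_i^{1/l})$ from being a degree-$l$ subextension of $K(A[l])$; ruling this out is precisely the hard arithmetic content of the proposition. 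Consequently your Schinzel-plus-heights treatment of the toric part only proves the case $A=0$ (which is indeed the routine computation) and does not apply here. The paper keeps the two coordinates together: from the hypothesis one correctly deduces $(\alpha_i,P)\in(\G_m\times A)_\mathrm{Rib}$, because all the relevant division points lie in $K(\G[l])=K(\mu_l,A[l])$, and then Bertrand's theorem \cite[Theorem 4 -- Proposition 4]{bertrand1995minimal} --- the genuinely deep input --- yields $h(\alpha_i)=0$.

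Your abelian part is also not in order, although the deduction $P\in A_\mathrm{Rib}$ itself is valid. What you would then need is $A_\mathrm{Rib}=A_\tors$ for the abelian variety alone (a Bashmakov--Ribet Kummer-theoretic statement), but your justification --- that $P\in A_\mathrm{Rib}$ holds exactly when an extension of $A$ by $\G_m$ ``attached to $P$'' splits --- is not what \cite{jacquinot1987deficient} says: extensions of $A$ by $\G_m$ are classified by the dual abelian variety, not by points of $A$, and the deficient points studied there live on the extension, not on $A$. The paper instead exploits that $(\alpha_i,P)$ is a deficient point of the \emph{split} extension $\G_m\times A$ and applies \cite[Theorem 4.6]{jacquinot1987deficient}: some multiple $[m]P$ equals $f(v)-f^t(v)$ where $v$ is the class of the extension, which vanishes here, so $P\in A_\tors$. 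In short, both halves of the argument require the ``pair'' viewpoint $(\alpha_i,P)\in(\G_m\times A)_\mathrm{Rib}$ rather than the product decomposition you propose.
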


\begin{proof}
On a d\'ej\`a l'inclusion $\G_\tors \subset \G_\mathrm{Rib}$. 

Soit $Q=((\alpha_1,\dots,\alpha_n), P)\in \G_\mathrm{Rib}$ et montrons que $Q\in \G_\tors=\mu_\infty ^n \times A_\tors$.
Pour tout entier $m\geq 1$, on a $\G[m]=\mu_m^n \times A[m]$. 
Il existe donc un corps de nombres $K \subset \overline{k}$ tel que $Q\in \G(K)$ et tel que pour tout nombre premier $l$ assez grand, \[K(\mu_l, A[l], \alpha_1^{1/l},\dots, \alpha_n^{1/l}, [1/l]P)=K(\mu_l, A[l]). \]
Soit $i\in\{1,\dots, n\}$. 
Il est immédiat que $K(\mu_l, A[l], \alpha_i^{1/l}, [1/l]P)=K(\mu_l, A[l])$ pour tout nombre premier $l$ assez grand. 
Par conséquent, $(\alpha_i, P)\in (\G_m \times A)_\mathrm{Rib}$. 
En appliquant maintenant un théorème profond de Bertrand \cite[Theorem 4- Proposition 4]{Bertrand} avec $G=\G_m \times A$, on obtient $h(\alpha_i)=0$, i.e. $\alpha_i\in\mu_\infty$. 

La fin de la preuve est due à l'un des référés anonymes. 
Comme $(\alpha_i,P)\in (\G_m \times A)_\mathrm{Rib}$, on déduit de \cite[Theorem 4.6]{JacquinotRibet} et \cite[§4]{JacquinotRibet} l'existence d'un entier $n\geq 1$ tel que $[n]P=f(v)-f^t(v)$ pour certains morphismes $f, f^t : \mathrm{Ext}(A,\G_m) \to A$ et où $v$ désigne l'élément de $\mathrm{Ext}(A,\G_m)$ paramétrant l'extension de $A$ par $\G_m$. 
Ici, cette extension est scindée, donc $v=0$. 
Par conséquent, $[n]P=f(0)-f^t(0)=O_A$. 
D'où, $P\in A_\tors$ et la proposition s'ensuit. 
\end{proof}
 
 \bibliographystyle{plain}

\end{document}